\setlist[enumerate]{font=\textup,itemsep=.5em,topsep=.5em}
\title{Polynomial functions on upper triangular matrix algebras }
\author[S.~Frisch]{Sophie Frisch}
\thanks{The research leading to this publication was supported by
the Austrian Science Fund FWF, grant P27816-N26.}
\address{Institut f\"ur Mathematik A, Technische Universit\"at Graz,
Kopernikusgasse 24, 8010 Graz, Austria}
\email{frisch@math.tugraz.at}
\newtheorem{thm}{Theorem}[section]
\newtheorem{prop}[thm]{Proposition}
\newtheorem{lem}[thm]{Lemma}
\newtheorem{cor}[thm]{Corollary}
\theoremstyle{definition}
\newtheorem{Def}[thm]{Definition}
\newtheorem{notn}[thm]{Notation}
\newtheorem{exa}[thm]{Example}
\newtheorem{rem}[thm]{Remark}
\newcommand{\natn}{\mathbb{N}}
\long\def\comment#1{\relax}
\long\def\commented_out#1{\relax}
\newcommand{\TnD}{{\rm T}_n(D)}
\newcommand{\TnK}{{\rm T}_n(K)}
\newcommand{\TnR}{{\rm T}_n(R)}
\newcommand{\TnS}{{\rm T}_n(S)}
\newcommand{\TnI}{{\rm T}_n(I)}
\DeclareMathOperator{\Int}{Int}  
\DeclareMathOperator{\Intl}{Int^{\ell}} 
\DeclareMathOperator{\IntK}{Int_K}  
\DeclareMathOperator{\IntR}{Int_R}  
\newcommand{\TMint}[1]{\IntR({\rm T}_{#1}(S), {\rm T}_{#1}(I))}
\newcommand{\NR}[1]{\mathrm{N}_{R}(T_{#1}(R))}
\newcommand{\TMMintn}{\Int_{\TnR}(\TnS,\TnI)}
\newcommand{\TMMintnl}{\Intl_{\TnR}(\TnS,\TnI)}
\newcommand{\TMintD}[1]{\IntK({\rm T}_{#1}(D))}
\newcommand{\TMMintnD}{\Int_{\TnK}(\TnD)}
\newcommand{\TMMintnDl}{\Intl_{\TnK}(\TnD)}
\newcommand{\TnSres}[2]{T_n^{[{#1},{#2}]}(S)} 
\newcommand{\fCl}{{f(C)_{\ell}}}
\let\tensor=\otimes 
\subjclass[2010]{Primary 13B25:
Secondary 13F20, 16A42, 15A24, 05A05, 11C08, 16P10 }
\keywords{integer-valued polynomials, null polynomials, zero polynomials,
polynomial functions, upper triangular matrices, matrix algebras,
polynomials on non-commutative algebras, matrices over commutative rings}
\begin{document}
\vspace*{-1.5cm}
\hbox{\ \hfill \ }
\vspace*{-1cm}
\hbox{to appear in Monatsh.~Math. (2017)\hfill}
\vspace*{1cm}
\begin{abstract}
There are two kinds of polynomial functions on matrix algebras over
commutative rings:
those induced by polynomials with coefficients in the algebra itself
and those induced by polynomials with scalar coefficients.
In the case of algebras of upper triangular matrices over a commutative
ring, we characterize the former in terms of the latter (which are 
easier to handle because of substitution homomorphism).
We conclude that the set of integer-valued polynomials with matrix
coefficients on an algebra of upper triangular matrices is a ring, and
that the set of null-polynomials with matrix coefficients on an
algebra of upper triangular matrices is an ideal.
\end{abstract}

\maketitle

\section{Introduction}
Polynomial functions on non-commutative algebras over commutative rings
come in two flavors:
one, induced by polynomials with scalar coefficients,
the other, by polynomials with coefficients in the 
non-commutative algebra itself 
\cite{LoWe12GRiv,Wer12IVM,EvFaJo13ivlt,Fri13IVA,Per14Ivmdd,Wer14PK,
PerWer16PIP,PeWe--ntC,PeWe--DPA}.

For the algebra of upper triangular matrices over a commutative ring,
we show how polynomial functions with matrix coefficients can be 
described in terms of polynomial functions with scalar coefficients.
In particular, we express integer-vaued polynomials with matrix
coefficients in terms of integer-valued polynomials with scalar
coefficients. The latter have been studied extensively by Evrard, 
Fares, Johnson \cite{EvFaJo13ivlt} and Peruginelli \cite{Per14Ivmdd} 
and have been characterized as polynomials that are integer-valued 
together with a certain kind of divided differences.

Also, our results have a bearing on several open questions in the 
subject of polynomial functions on non-commutative algebras.
They allow us to answer in the affirmative, in the case of algebras 
of upper triangular matrices, two questions (see section \ref{applsect})
posed by N.~Werner \cite{Wer14PK} :
whether the set of null-polynomials on a non-commutative algebra
forms a two-sided ideal and
whether the set of integer-valued polynomials on a non-commutative 
algebra forms a ring.
In the absence 
of a substitution homomorphism for polynomial functions on 
non-commutative rings, neither of these properties is a given.

Also, our results on polynomials on upper triangular matrices show 
that we may be able to describe polynomial functions induced by
polynomials with matrix coefficients by polynomial functions
induced by polynomials with scalar coefficients, even when the
relationship is not as simple as in the case of the full matrix
algebra.

Let $D$ be a domain with quotient field $K$, $A$ a finitely generated
torsion-free $D$-algebra and $B=A\tensor_D K$. To exclude pathological
cases we stipulate that $A\cap K=D$. Then the set of right integer-valued
polynomials on $A$ is defined as
\[\Int_B(A) = \{f\in B[x]\mid \forall a\in A\> f(a)\in A\},\]
where $f(a)$ is defined by substitution on the right of the
coefficients, $f(a)=\sum_k b_k a^k$, for $a\in A$ and
$f(x)=\sum_k b_k x^k\in B[x]$. Left integer-valued polynomials
are defined analogously, using left-substitution:
\[\Int^{\ell}_B(A) = \{f\in B[x]\mid \forall a\in A\> 
f(a)_{\ell}\in A\},\]
where $f(a)_{\ell}=\sum_k a^k b_k$. Also, we have 
integer-valued polynomials on $A$ with scalar coefficients:
\[
\IntK(A)=\{f\in K[x]\mid \forall a\in A\> f(a)\in A\}.
\]

For $A=M_n(D)$ and $B=M_n(K)$ we could show \cite{Fri13IVA} that 
\[\Int_B(A) = \IntK(A)\tensor_D A. \]
Peruginelli and Werner \cite{PeWe--DPA} have
characterized the algebras for which this relationship holds.
We now take this investigation one step further and
show for algebras of upper triangular matrices, where $A=\TnD$ and
$B=\TnK$, that $\Int_B(A)$ can be described quite nicely in terms of
$\IntK(A)$ (cf.~Remark~\ref{uppertriangular},
Theorem~\ref{matrix-scalar-connection}), but that the connection 
is not as simple as merely tensoring with $A$.

\comment{
Describing integer-valued polynomials with matrix coefficients on 
algebras of upper triangular matrices by integer-valued polynomials 
with scalar coefficients is useful and informative, because the latter
polynomials have been studied extensively: they have been characterized
as polynomials that are integer-valued together with a certain 
kind of divided differences \cite{EvFaJo13ivlt,Per14Ivmdd}. 
}

For a ring $R$, and $n\ge 1$, let $M_n(R)$ denote the ring of
$n\times n$ matrices with entries in $R$ and $T_n(R)$ the subring of
upper triangular matrices, i.e.,
the ring consisting of $n\times n$ matrices
$C=(c_{ij})$ with $c_{ij}\in R$ and
\[c_{ij}\ne 0\Longrightarrow j\ge i.\]

\begin{rem}\label{isomorphism}
Let $R$ be a commutative ring.
We will make extensive use of the ring isomorphism between
the ring of polynomials in one variable with coefficients in $M_n(R)$
and 
the ring of $n\times n$ matrices with entries in $R[x]$:
\[
\varphi\colon (M_n(R))[x]\rightarrow M_n(R[x]),\quad
\sum_k (a_{ij}^{(k)})_{\scriptscriptstyle1\le i,j\le n}\, x^k \mapsto 
\left( \sum_k a_{ij}^{(k)} x^k \right)_{\scriptscriptstyle\!\!1\le i,j\le n}
\]
The restriction of $\varphi$ to $(T_n(R))[x]$ gives a ring 
isomorphism between $(T_n(R))[x]$ and $T_n(R[x])$.
\end{rem}

\begin{notn}\label{matrixcoeffnotation}
For $f\in (M_n(R))[x]$,
$F_k$ denotes the $k$-th coefficient of $f$, 
and $f_{ij}^{(k)}\in R$ the $(i,j)$-th entry in $F_k$. When we 
reinterpret $f$ as an element of $M_n(R[x])$ via the ring isomorphism of 
Remark~\ref{isomorphism}, we denote the $(i,j)$-th entry of $f$
by $f_{ij}$. 

In other words,
\[
f=\sum_k F_k x^k,
\quad\textrm{where}\quad 
F_k=(f_{ij}^{(k)})_{1\le i,j\le n}
\]
and
\[
\varphi(f)= (f_{ij})_{1\le i,j\le n},
\quad\textrm{where}\quad 
f_{ij}=\sum_k f_{ij}^{(k)} x^k
\]

Also, we write $\left[ M\right]_{i\,j}$ for the $(i,j)$-the entry of
a matrix $M$. In particular, $\left[f(C)\right]_{i\,j}$ is the
$(i,j)$-the entry of $f(C)$, the result of substituting the matrix $C$ 
for the variable (to the right of the coefficients) in $f$, and
$\left[\fCl\right]_{i\,j}$ the $(i,j)$-th entry of $\fCl$, the result
of substituting $C$ for the variable in $f$ to the left of the
coefficients.

\end{notn}

We will work in a setting that allows us to consider 
integer-valued polynomials and null-polynomials on
upper triangular matrices at the same time.

From now on, $R$ is a commutative ring, $S$ a subring of $R$,
and $I$ an ideal of $S$.

\begin{notn} Let $R$ be commutative ring, $S$ a subring of $R$
and $I$ an ideal of $S$. 

\begin{align*}
\TMint{n}&=\{f\in R[x]\mid \forall C\in T_n(S):\, f(C)\in T_n(I)\}\\
\TMMintn&=\{f\in (T_n(R))[x] \mid 
\forall C\in T_n(S):\, f(C)\in T_n(I)\}\\
\TMMintnl&=\{f\in (T_n(R))[x] \mid 
\forall C\in T_n(S):\, \fCl\in T_n(I)\},
\end{align*}
where 
\[
f(C)=\sum_k F_k C^k
\qquad {\textrm and}\qquad
\fCl=\sum_k  C^k F_k
\qquad {\textrm for}\qquad 
f=\sum_k F_k x^k.
\]
\end{notn}

\begin{exa}\label{ivpoly}
When $D$ is a domain with quotient field $K$ and we set $R=K$ and $S=I=D$,
then $\TMint{n}=\TMintD{n}$ is the ring of integer-valued polynomials
on $T_n(D)$ with coefficients in $K$ and
$\TMMintn=\TMMintnD$, the set of right integer-valued polynomials on
$T_n(D)$ with coefficients in $(T_n(K))$. We will show that the latter
set is closed under multiplication and, therefore, a ring, 
cf.~Theorem~\ref{ring}.
\end{exa}

\begin{exa}\label{nullpoly}
When $R$ is a commutative ring and we set $S=R$ and $I=(0)$, then
$\TMint{n}=\NR{n}$ is the ring of those polynomials in $R[x]$ that
map every matrix in $T_n(R)$ to zero, and
$\TMMintn=\mathrm{N}_{T_n(R)}(T_{n}(R))$ is the
set of polynomials in $(T_n(R))[x]$ that map every matrix in
$T_n(R)$ to zero under right substitution. 
We will show that the latter set is actually
an ideal of $(T_n(R))[x]$, cf.~Theorem~\ref{ideal}. 
\end{exa}

We illustrate here in matrix form our main result on the connection
between the two kinds of polynomial functions on upper triangular 
matrices, those induced by polynomials with matrix coefficients on 
one hand, and those induced by polynomials with scalar coefficients 
on the other hand. (For details, see Theorem
\ref{matrix-scalar-connection}.)

\begin{rem}\label{uppertriangular}
If we identify $\TMMintn$ and $\TMMintnl$ --- a priori subsets of
$(\TnR)[x]$ --- with their respective images
in $T_n(R[x])$ under the ring isomorphism
\[
\varphi\colon (\TnR)[x]\rightarrow T_n(R[x]),\quad
\sum_k (f_{ij}^{(k)})_{\scriptscriptstyle1\le i,j\le n}\, x^k \mapsto 
\left( \sum_k f_{ij}^{(k)} x^k \right)_{\scriptscriptstyle\!\!1\le i,j\le n}
\]
then

\begin{enumerate}
\item
$\TMMintn=$ 
\[
\begin{pmatrix}
\TMint{n}&\TMint{n-1}&\ldots&\TMint{2}&\IntR(S,I)\cr
0&\TMint{n-1}&\ldots&\TMint{2}&\IntR(S,I)\cr
&&\ddots&&\cr
0&0&\ldots&\TMint{2}&\IntR(S,I)\cr
0&0&\ldots&0&\IntR(S,I)\cr
\end{pmatrix}
\]
\vspace*{1ex}
\item
$\TMMintnl=$ 
\[
\begin{pmatrix}
\IntR(S,I)&\IntR(S,I)&\ldots&\IntR(S,I)&\IntR(S,I)\cr
0&\TMint{2}&\ldots&\TMint{2}&\TMint{2}\cr
&&\ddots&&\cr
0&0&\ldots&\TMint{n-1}&\TMint{n-1}\cr
0&0&\ldots&0&\TMint{n}\cr
\end{pmatrix}
\]
\comment{%
\item
$\TMMintn\cap\TMMintnl$, the ring of those polynomials with coefficients
in $\TnR$ that map matrices in $\TnS$ to matrices $\TnI$ whether
the arguments are substituted for the variable in to the right of the 
coefficients or to the left of the coefficients, equals the ring of
matrices whose $(i,j)$-th entry is in $\TMint{m}$ with 
$m=\min(i, n-j+1)$.
\[
\begin{pmatrix}
\TMint{n}&\TMint{n-1}&\ldots&\TMint{2}&\Int_R(S,I)\cr
0&\TMint{n-1}&\ldots&\TMint{2}&\TMint{2}\cr
&&\ddots&&\cr
0&0&\ldots&\TMint{n-1}&\TMint{n-1}\cr
0&0&\ldots&0&\TMint{n}\cr
\end{pmatrix}
\]
} 
\end{enumerate}
\end{rem}

\section{path polynomials and polynomials with scalar coefficients}%
\label{pathpolysect}

We will use the combinatorial interpretation of the $(i,j)$-th entry in 
the $k$-th power of a matrix as a weighted sum of paths from $i$ to $j$.

Consider a set $V$ together with a subset $E$ of $V\times V$. 
We may regard the pair $(V,E)$ as a set with a binary relation
or as a directed graph. Choosing the interpretation as a graph,
we may associate monomials to paths and polynomials to finite
sets of paths.

For our purposes, a path of length $k\ge 1$ from $a$ to $b$ (where $a,b\in V$)
in a directed graph $(V,E)$ is a sequence $e_1e_2\ldots e_{k}$ of edges
$e_i=(a_i,b_i)\in E$ such that $a_1=a$, $b_{k}=b$ and $b_{j}=a_{j+1}$
for $1\le j<k$. Also, for each $a\in V$, there is a path of length $0$
from $a$ to $a$, which we denote by $\varepsilon_a$. (For $a\ne b$
there is no path of length $0$ from $a$ to $b$.)

We introduce a set of independent variables 
$X=\{x_{v\, w}\mid (v,\, w)\in V\times V\}$ and consider the polynomial 
ring 
\[R[X]=R[\{x_{v\, w}\mid (v,\, w)\in V\times V\}]
\]
with coefficients in the commutative ring $R$. 

To each edge $e=(a, b)$ in $E$, we associate the variable $x_{ab}$
and to each path $e_1e_2\ldots e_{k}$ of length $k$, with $e_i=(a_i,b_i)$,
the monomial of degree $k$ which is the product of the variables associated 
to the edges of the path:
$x_{a_1b_1}x_{a_2b_2}\ldots x_{a_kb_k}$.
To each path of length $0$ we associate the monomial $1$.

If $E$ is finite, or, more generally, if for any pair of vertices 
$a,b\in V$ and fixed $k\ge 0$ there are only finitely many paths in 
$(V,E)$ from $a$ to $b$, we define the $k$-th path polynomial 
from $a$ to $b$, denoted by $p_{a b}^{(k)}$, as the sum in $R[X]$ of the 
monomials 
corresponding to paths of length $k$ from $a$ to $b$. If there is no
path of length $k$ from $a$ to $b$ in $(V,E)$, we set $p_{a b}^{(k)}=0$.

From now on, we fix the relation $(\natn, \le)$ and all path polynomials
will refer to the graph of $(\natn, \le)$, where 
$\natn=\{1, 2, 3, \ldots \}$, or one of its finite 
subgraphs given by intervals. The finite subgraph given by the interval
\[[i,j]=\{k\in\natn\mid i\le k\le j\}\] is the graph with 
set of vertices $[i,j]$ and set of edges $\{(a,b)\mid i\le a\le b\le j\}$.

Because of the transitivity of the relation ``$\le$'', a path in
$(\natn,\le)$ from $a$ to $b$ involves only vertices in the 
interval $[a,\, b]$. The path polynomial $p_{a b}^{(k)}$,
therefore, is the same whether we consider $a, b$ as vertices in the 
graph $(\natn,\le)$, or any subgraph given by an interval $[i,j]$ with 
$a,b\in [i,j]$.
So we may suppress all references to intervals and subgraphs and 
define:

\begin{Def}\label{pathpolydef}
Let $R$ be a commutative ring.
The $k$-th path polynomial from $i$ to $j$
(corresponding to the
relation $(\natn,\le)$) in $R[x]$ is defined by
\begin{enumerate}
\item
For $1\le i\le j$ and $k> 0$ by\\
\[p_{ij}^{(k)}=\sum_{i=i_1\le i_2\le\ldots \le i_{k+1}=j} 
x_{i_1 i_2}x_{i_2 i_3}\ldots x_{i_{k-1}i_k}x_{i_k i_{k+1}},
\]
\item
for $1\le i \le j$ and $k=0$ by $p_{ij}^{(0)}=\delta_{ij}$,
\item
for $i>j$ and all $k$: $p_{ij}^{(k)}=0$.
\end{enumerate}

For $a,b\in\natn$, we define the sequence of path polynomials from 
$a$ to $b$ as 
\[
p_{a b}=(p_{a b}^{(k)})_{k\ge 0}.
\]
\end{Def}

\begin{rem}\label{pathpolyrem}
Note that $p_{ij}^{(k)}$ is the $(i,j)$-th entry of the $k$-th
power of a generic upper triangular $n\times n$ matrix (with $n\ge i,j$)
whose $(i,j)$-th entry 
is $x_{ij}$ when $i\le j$ and zero otherwise.
\end{rem}

\begin{exa}
The sequence of path polynomials from $2$ to $4$ is
\[
p_{2 4}=(0,\;\; x_{24},\;\; x_{22}x_{24}+x_{23}x_{34}+x_{24}x_{44},\;\ldots\ )
\]
and
\[
p^{(3)}_{24}=
x_{22}x_{22}x_{24} + x_{22}x_{23}x_{34} + x_{22}x_{24}x_{44} +
x_{23}x_{33}x_{34} + x_{23}x_{34}x_{44} + x_{24}x_{44}x_{44} 
\]

Again, note that $p_{2 4}$ is the sequence of entries in position $(2,4)$ in
the powers $G^0,G, G^2, G^3,\ldots$ of a generic $n\times n$ (with $n\ge 4$)
upper triangular
matrix $G=(g_{ij})$ with $g_{ij}=x_{ij}$ for $i\le j$ and $g_{ij}=0$
otherwise.
\end{exa}

In addition to right and left substitution of a matrix for the variable 
in a polynomial in $R[x]$ or $(\TnR)[x]$, we are going to use another
way of plugging matrices into polynomials, namely, into polynomials in
$R[X]=R[\{x_{ij}\mid i,j\in\natn\}]$. For this purpose, the matrix
$C=(c_{ij})\in M_n(R)$ is regarded as a vector of elements of $R$ 
indexed by $\natn\times\natn$, with $c_{ij}=0$ for $i>n$ or $j>n$:

\begin{Def}\label{matrixsubstitution}
For a polynomial $p\in R[X]=R[\{x_{ij}\mid i, j\in\natn\}]$
and a matrix $C=(c_{ij})\in M_n(R)$ we define $p(C)$ as the result of 
substituting $c_{ij}$ for those $x_{ij}$ in $p$ with $i,j\le n$ and 
substituting $0$ for all $x_{kh}$ with $k>n$ or $h>n$.
\end{Def}

To be able to describe the $(i,j)$-th entry in $f(C)$, where
$f\in R[x]$, we need one more construction:
For sequences of polynomials $p=(p_i)_{i\ge 0}, q=(q_i)_{i\ge 0}$ 
in $R[X]$, at least one of which is finite, we define a scalar product
$\langle p, q\rangle=\sum_i p_i q_i$. Actually, we only need one
special instance of this construction, that where one of the sequences
is the sequence of coefficients of a polynomial in $R[x]$ and the
other a sequence of path polynomials from $a$ to $b$.

\begin{Def}\label{scalarproddef}
Given $f=f_1+f_1x+\ldots f_mx^m\in R[x]$ (which we identify with the 
sequence of its coefficients), $a,\, b\in\natn$, and 
$p_{a b}=(p_{a b}^{(k)})_{k=0}^\infty$ the sequence of path polynomials
from $a$ to $b$ as in Definition~\ref{pathpolydef}, we define
\[
\langle f,\, p_{a b}\rangle=\sum_{k\ge 0} f_k\, p_{a b}^{(k)}.
\]
\end{Def}

\begin{Def}\label{Sstarimage}
For a polynomial $p\in R[X]=R[\{x_{ij}\mid i,j\in\natn\}]$
and $S\subseteq R$, we define the image $p(S^{\ast})\subseteq R$ as the 
set of values of $p$ as the variables occurring in $p$ range through $S$
independently. (The star in $S^{\ast}$ serves to remind us that the
arguments of $p$ are not elements of $S$, but $k$-tuples of elements of
$S$ for unspecified $k$.)

We define $\Int(S^{\ast},I)$ as the set of those polynomials in
$R[X]=R[\{x_{ij}\mid i,j\in\natn\}]$ that take values in $I$ whenever 
elements of $S$ are substituted for the variables. 
\end{Def}

The notation $\Int(S^{\ast},I)$ is suggested by the convention that 
$\Int(S,I)$ consists of polynomials in one indeterminate mapping 
elements of $S$ to elements of $I$ and, for $k\in\natn$,
$\Int(S^{k},I)$ consists of polynomials in $k$ indeterminates mapping
$k$-tuples of elements of $S$ to $I$.

We summarize here the connection between path poynomials and the
related constructions of Definitions~\ref{pathpolydef}, 
\ref{matrixsubstitution}, \ref{scalarproddef} and \ref{Sstarimage}
 with entries of powers of matrices and
entries of the image of a matrix under a polynomial function.

\begin{rem}\label{pathpolyproperties}
Let $R$ be a commutative ring, $C\in\TnR$, $k\ge 0$, $1\le i,j\le n$, 
and $p_{ij}^{(k)}$ the $k$-the path polynomial from $i$ to $j$ 
in $R[x]$ as in Definition~\ref{pathpolydef}.
\begin{enumerate}
\item\label{pathpolypower}
$\left[C^k\right]_{i j} = p_{ij}^{(k)}(C)$
\item\label{fCij}
For $f\in R[x]$,
$\left[f(C)\right]_{i j} = \langle f, p_{i j}\rangle(C)$.
\item\label{pathpolyzero}
If the $i$-th row or the $j$-th column of $C$ is zero then
$p_{ij}^{(k)}(C)=0$, and for all $f\in R[x]$, 
$\langle f, p_{i j}\rangle(C)=0$.
\item\label{pathpolyimage}
$
p_{ij}^{(k)}(S^{\ast})=
\{p_{ij}^{(k)}(C)\mid C\in\TnS\}.
$
\end{enumerate}
\end{rem}

\begin{proof}
(\ref{pathpolypower}) and (\ref{fCij}) follow immediately from
Definitions \ref{pathpolydef}, \ref{matrixsubstitution} and
\ref{scalarproddef}.
 Compare Remark~\ref{pathpolyrem}.
(\ref{pathpolyzero}) follows from (\ref{fCij}) and
Definition~\ref{matrixsubstitution}, since every monomial
occurring in $p_{ij}^{(k)}$ involves a variable $x_{im}$ for some $m$
and a variable $x_{hj}$ for some $h$. Also, (\ref{pathpolyimage}) 
follows from Definitions~\ref{pathpolydef} and \ref{matrixsubstitution}.
\end{proof}

\begin{lem}\label{image-equiv}
Let $f\in R[x]$.
The image of $\langle f, p_{ij}\rangle$ 
under substitution of elements of $S$ 
for the variables depends only on $f$ and $j-i$,
that is, for all $i\le j$ and all $m\in\natn$
\[
\langle f, p_{ij}\rangle(S^{\ast})=
\langle f, p_{i+m\> j+m}\rangle(S^{\ast})
\]
\end{lem}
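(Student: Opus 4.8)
The plan is to exploit the translation invariance of the relation $(\natn,\le)$: since $a\le b$ if and only if $a+m\le b+m$, the shift $a\mapsto a+m$ carries chains from $i$ to $j$ bijectively onto chains from $i+m$ to $j+m$, preserving length, and I would turn this combinatorial bijection into an algebraic statement about the path polynomials. Concretely, I would introduce the injective $R$-algebra homomorphism $\sigma_m\colon R[X]\to R[X]$ determined on generators by $\sigma_m(x_{ab})=x_{a+m\> b+m}$. By Definition~\ref{pathpolydef}, $p_{ij}^{(k)}$ is the sum of the monomials $x_{i_1i_2}\cdots x_{i_ki_{k+1}}$ over chains $i=i_1\le\cdots\le i_{k+1}=j$; applying $\sigma_m$ sends such a monomial to $x_{i_1+m\> i_2+m}\cdots x_{i_k+m\> i_{k+1}+m}$, and $(i_1,\ldots,i_{k+1})\mapsto(i_1+m,\ldots,i_{k+1}+m)$ is a bijection between chains from $i$ to $j$ and chains from $i+m$ to $j+m$. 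Hence $\sigma_m(p_{ij}^{(k)})=p_{i+m\> j+m}^{(k)}$ for every $k$, and by linearity $\sigma_m(\langle f,p_{ij}\rangle)=\langle f,p_{i+m\> j+m}\rangle$.

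Next I would argue that passing from $\langle f,p_{ij}\rangle$ to its image under $\sigma_m$ does not change the image under substitution of elements of $S$. Since $\sigma_m$ is injective, it maps the finite set of variables occurring in $\langle f,p_{ij}\rangle$ bijectively onto the set of variables occurring in $\langle f,p_{i+m\> j+m}\rangle$, so the latter polynomial arises from the former purely by an injective renaming of variables. Because $p(S^{\ast})$ is defined by letting the variables range \emph{independently} over $S$, each assignment of $S$-values to the variables of $\langle f,p_{ij}\rangle$ corresponds via $\sigma_m$ to an assignment to the variables of $\langle f,p_{i+m\> j+m}\rangle$ yielding the same value, and every assignment to the latter arises in this way; hence the two image sets coincide, which is exactly the assertion.

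The argument is essentially formal once the translation bijection is recorded, so I do not expect a serious obstacle. The only point requiring care is the last step: one must note that only finitely many variables actually occur in $\langle f,p_{ij}\rangle$ (so that no infinite-substitution issue arises) and state cleanly that $p(S^{\ast})$ is invariant under an injective renaming of variables — which is precisely where the independence of the variables in the definition of $S^{\ast}$, as opposed to substituting a single matrix, is used.
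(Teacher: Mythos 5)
Your proposal is correct and follows essentially the same route as the paper: the paper's proof defines the $R$-algebra isomorphism $\psi$ between $R[\{x_{hk}\mid i\le h\le k\le j\}]$ and $R[\{x_{hk}\mid i+m\le h\le k\le j+m\}]$ sending $x_{hk}$ to $x_{h+m\,k+m}$, observes that it carries $\langle f,p_{ij}\rangle$ to $\langle f,p_{i+m\>j+m}\rangle$, and concludes that a renaming of variables does not change the image under substitution. Your $\sigma_m$ is the same map (extended to all of $R[X]$), and your extra care about the chain bijection and the finiteness of the variable set only makes explicit what the paper leaves implicit.
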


\begin{proof}
The $R$-algebra isomorphism 
\[
\psi\colon R[\{x_{hk}\mid i\le h\le k\le j\}]\rightarrow
R[\{x_{hk}\mid i+m\le h\le k\le j+m\}]
\]
with $\psi(x_{hk}) = \psi(x_{h+m\, k+m})$ and $\psi(r)=r$
for all $r\in R$ maps $\langle f, p_{ij}\rangle$ to 
$\langle f, p_{i+m\> j+m}\rangle$.

Applying $\psi$ amounts to a renaming of variables;
it doesn't affect the image of the polynomial function resulting 
from substituting elements of $S$ for the variables.
\end{proof}
\goodbreak

\begin{prop}\label{scalar-coeff-iv-criterion}
Let $f\in R[x]$. The following are equivalent
\begin{enumerate}
\item\label{scalar-ivpoly}
$f\in \Int_R(T_n(S), T_n(I))$
\item\label{each-entry}
$\forall\; 1\le i\le j\le n\quad 
\langle f, p_{ij}\rangle\in \Int(S^{\ast},I)$
\item\label{eqiv-pathpoly}
$\forall\; 0\le k\le n-1\quad
\exists i\in\natn\; \langle f, p_{i\> i+k}\rangle\in \Int(S^{\ast},I)$
\end{enumerate}
\end{prop}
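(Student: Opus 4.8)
The plan is to prove $(\ref{scalar-ivpoly})\Leftrightarrow(\ref{each-entry})$ directly, to note that $(\ref{each-entry})\Rightarrow(\ref{eqiv-pathpoly})$ is immediate, and to close the loop with $(\ref{eqiv-pathpoly})\Rightarrow(\ref{each-entry})$ using Lemma~\ref{image-equiv}. For the first equivalence, I would start from the observation that every $C\in T_n(S)$ is upper triangular, hence so is $f(C)$; therefore $f(C)\in T_n(I)$ holds if and only if $[f(C)]_{ij}\in I$ for all $1\le i\le j\le n$. By Remark~\ref{pathpolyproperties}(\ref{fCij}) we have $[f(C)]_{ij}=\langle f, p_{ij}\rangle(C)$, so $f\in\Int_R(T_n(S),T_n(I))$ is equivalent to $\langle f, p_{ij}\rangle(C)\in I$ for every $C\in T_n(S)$ and every $1\le i\le j\le n$. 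The remaining task is to identify this with membership in $\Int(S^{\ast},I)$: the variables actually occurring in $\langle f, p_{ij}\rangle$ are exactly the $x_{hk}$ with $i\le h\le k\le j$ (by the shape of the monomials in the $p_{ij}^{(m)}$), and as $C$ ranges over $T_n(S)$ the corresponding entries $c_{hk}$ range independently over $S$. Thus $\{\langle f, p_{ij}\rangle(C)\mid C\in T_n(S)\}=\langle f, p_{ij}\rangle(S^{\ast})$, exactly as in Remark~\ref{pathpolyproperties}(\ref{pathpolyimage}), and this set is contained in $I$ precisely when $\langle f, p_{ij}\rangle\in\Int(S^{\ast},I)$, which is~(\ref{each-entry}).

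The implication $(\ref{each-entry})\Rightarrow(\ref{eqiv-pathpoly})$ is trivial: for any $0\le k\le n-1$ the choice $i=1$ satisfies $1\le 1\le 1+k\le n$, so~(\ref{each-entry}) already supplies a witness $\langle f, p_{1\,1+k}\rangle\in\Int(S^{\ast},I)$. For $(\ref{eqiv-pathpoly})\Rightarrow(\ref{each-entry})$, I would fix $1\le i\le j\le n$ and put $k=j-i$, so that $0\le k\le n-1$. By~(\ref{eqiv-pathpoly}) there is some $i_0\in\natn$ with $\langle f, p_{i_0\,i_0+k}\rangle(S^{\ast})\subseteq I$. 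Since $j-i=(i_0+k)-i_0$, Lemma~\ref{image-equiv} gives $\langle f, p_{ij}\rangle(S^{\ast})=\langle f, p_{i_0\,i_0+k}\rangle(S^{\ast})\subseteq I$, whence $\langle f, p_{ij}\rangle\in\Int(S^{\ast},I)$, which is~(\ref{each-entry}).

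I expect the only substantive step to be $(\ref{eqiv-pathpoly})\Rightarrow(\ref{each-entry})$, and even there all the work has been packaged into Lemma~\ref{image-equiv}: the content of~(\ref{eqiv-pathpoly}) is precisely that the $S$-substitution image of $\langle f, p_{ij}\rangle$ depends only on the ``distance'' $j-i$, so a single representative per diagonal suffices. The step requiring the most care is the identification in $(\ref{scalar-ivpoly})\Leftrightarrow(\ref{each-entry})$ of the matrix-value set with the free-variable image $\langle f, p_{ij}\rangle(S^{\ast})$; one must verify that no variable $x_{hk}$ with $h>k$ or with indices outside $[i,j]$ occurs, which is exactly the structural feature of the path polynomials recorded in Definition~\ref{pathpolydef}.
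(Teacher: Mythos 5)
Your proof is correct and follows essentially the same route as the paper: the equivalence of (\ref{scalar-ivpoly}) and (\ref{each-entry}) via Remark~\ref{pathpolyproperties}(\ref{fCij}) together with the observation that the variables of $\langle f,p_{ij}\rangle$ range independently over $S$ as $C$ ranges over $T_n(S)$, and the equivalence of (\ref{each-entry}) and (\ref{eqiv-pathpoly}) via Lemma~\ref{image-equiv}. The extra details you supply (upper triangularity of $f(C)$, the explicit witness $i=1$, the shift argument) are exactly what the paper leaves implicit.
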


\begin{proof}
The $(i,j)$-th entry of $f(C)$, for $C\in \TnR$, is
$\langle f, p_{ij}\rangle$(C), 
by Remark~\ref{pathpolyproperties} (\ref{fCij}).
If $C$ varies through 
$\TnS$, then all variables occurring in $\langle f, p_{ij}\rangle$
vary through $S$ independently.
This shows the equivalence of (\ref{scalar-ivpoly}) and
(\ref{each-entry}). 

By Lemma~\ref{image-equiv}, the image of 
$\langle f, p_{ij}\rangle$ as the variables range through $S$
depends only on $f$ and $j-i$. This shows the equivalence of 
(\ref{each-entry}) and (\ref{eqiv-pathpoly}).
\end{proof}

\section{Lemmata for polynomials with matrix coefficients}%
\label{matrixcoeffsect}

\begin{notn}\label{TnSres}
Given $C\in M_n(R)$ and $1\le h\le j \le n$,
let $C^{[h,j]}$ be the matrix obtained from $C$ by replacing 
all entries with row- or column-index outside the interval 
$[h,j]=\{m\in\natn \mid h\le m\le j\}$ by zeros; and
for $S\subseteq R$,  let
\[
\TnSres{h}{j}=\{C^{[h,j]}\mid C\in\TnS\}.
\]
\end{notn}

\begin{rem}\label{resfCijh}
Note that, for $f\in R[x]$ and $C\in\TnR$, 
\[\left[f(C)\right]_{ij}=
\langle f, p_{i j}\rangle(C)=
\langle f, p_{i j}\rangle(C^{[i,j]})=
\left[f(C^{[i,j]})\right]_{ij} .\]
This is so because no variables other than $x_{st}$ with
$i\le s\le t\le j$ occur in $p^{(k)}_{i j}$, for any $k$.
\end{rem}

We derive some technical, but useful, formulas for 
the $(i,j)$-th entry in $f(C)$ and $\fCl$, respectively, where 
$f\in (T_n(R))[x]$ and $C\in \TnR$.

\begin{lem}\label{summationchange}
Let $f\in (\TnR)[x]$ and $C\in\TnR$ with notation as in 
\ref{matrixcoeffnotation} and in Definitions \ref{pathpolydef},
\ref{matrixsubstitution}, and \ref{scalarproddef}.
Then, for all $1\le i\le j\le n$, we have

\begin{description}\itemsep10pt \itemindent=0em \leftmargin=0.5em
\item[(R)]\label{summationchange-right}

\[
\left[f(C)\right]_{i\,j} = 
\sum_{h\in [i, j]} \left[f_{ih}(C)\right]_{h\,j} =
\sum_{h\in [i, j]} \left[f_{ih}(C^{[h,j]})\right]_{h\,j}
\]
and also
\[
\left[f(C)\right]_{i\,j} = 
\sum_{h\in [i, j]} \left\langle f_{ih}, p_{hj}\right\rangle(C) =
\sum_{h\in [i, j]} \left\langle f_{ih}, p_{hj}\right\rangle(C^{[h,j]}) .
\]

\item[(L)]\label{summationchange-left}

\[
\left[\fCl\right]_{i\,j} = 
\sum_{h\in [i, j]}  \left[f_{hj}(C)\right]_{i\,h} =
\sum_{h\in [i, j]} \left[f_{hj}(C^{[i,h]})\right]_{i\,h}
\]
and also
\[
\left[\fCl\right]_{i\,j} = 
\sum_{h\in [i, j]} \left\langle f_{hj}, p_{ih}\right\rangle(C) =
\sum_{h\in [i, j]}  \left\langle f_{hj}, p_{ih}\right\rangle(C^{[i,h]}).
\]
\end{description}

\end{lem}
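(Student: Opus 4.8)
The plan is to prove both formulas (R) and (L) by unwinding the matrix multiplication underlying right- and left-substitution, and then identifying each resulting entry as a scalar-coefficient evaluation via Remark~\ref{pathpolyproperties}. I will treat (R) in detail; (L) follows by the symmetric computation. First I would write $f=\sum_k F_k x^k$, so that $f(C)=\sum_k F_k C^k$, and compute the $(i,j)$-th entry directly: by the definition of matrix multiplication,
\[
\left[f(C)\right]_{i\,j}=\sum_k\sum_{h=1}^{n}\left[F_k\right]_{i\,h}\left[C^k\right]_{h\,j}
=\sum_{h=1}^{n}\sum_k f^{(k)}_{ih}\left[C^k\right]_{h\,j}.
\]
The key observation is that $\sum_k f^{(k)}_{ih}\left[C^k\right]_{h\,j}$ is exactly $\left[f_{ih}(C)\right]_{h\,j}$, where $f_{ih}=\sum_k f^{(k)}_{ih}x^k$ is the $(i,h)$-entry of $\varphi(f)\in T_n(R[x])$ from Notation~\ref{matrixcoeffnotation}; here I am using that $f_{ih}$ has scalar coefficients so that ordinary substitution applies and commutes with extracting the $(h,j)$-entry of the power $C^k$.

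Next I would restrict the summation range. Since $f\in(\TnR)[x]$, the coefficient $f^{(k)}_{ih}$ vanishes whenever $h<i$, and since $C\in\TnR$, the entry $\left[C^k\right]_{h\,j}$ vanishes whenever $h>j$ (by Remark~\ref{pathpolyproperties}(\ref{pathpolyzero}), or directly because path polynomials $p_{hj}^{(k)}$ are zero for $h>j$). Hence only indices $h\in[i,j]$ survive, giving the first equality in (R). For the scalar-product reformulation I would invoke Remark~\ref{pathpolyproperties}(\ref{fCij}) with the scalar polynomial $f_{ih}$ in place of $f$, which yields $\left[f_{ih}(C)\right]_{h\,j}=\langle f_{ih},p_{hj}\rangle(C)$, establishing the third displayed equality. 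Finally, the passage to the truncated matrix $C^{[h,j]}$ is immediate from Remark~\ref{resfCijh} applied to the scalar polynomial $f_{ih}$ and the index pair $(h,j)$: that remark states $\left[f_{ih}(C)\right]_{h\,j}=\left[f_{ih}(C^{[h,j]})\right]_{h\,j}=\langle f_{ih},p_{hj}\rangle(C^{[h,j]})$, because no variable outside $\{x_{st}\mid h\le s\le t\le j\}$ occurs in any $p^{(k)}_{hj}$.

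For (L), I would run the mirror-image computation on $\fCl=\sum_k C^kF_k$, whose $(i,j)$-entry is $\sum_h\sum_k\left[C^k\right]_{i\,h}f^{(k)}_{hj}=\sum_h\left[f_{hj}(C)\right]_{i\,h}$; the triangularity constraints now force $h\ge i$ (from $\left[C^k\right]_{i\,h}$) and $h\le j$ (from $f^{(k)}_{hj}$), again confining $h$ to $[i,j]$, and Remark~\ref{pathpolyproperties}(\ref{fCij}) together with Remark~\ref{resfCijh} (applied to the pair $(i,h)$) supply the scalar-product and truncated-matrix forms. I do not anticipate a serious obstacle here; the only point requiring care is bookkeeping the two independent triangularity conditions that jointly cut the summation down to $h\in[i,j]$, and keeping straight that in (R) the path polynomial is $p_{hj}$ attached to the second index pair while in (L) it is $p_{ih}$ attached to the first, so that the correct truncation interval is $[h,j]$ in the right case and $[i,h]$ in the left case.
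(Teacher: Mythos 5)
Your proposal is correct and follows essentially the same route as the paper's proof: expand $[f(C)]_{ij}$ via matrix multiplication, swap the order of summation, use the triangularity of the coefficients and of $C$ to restrict $h$ to $[i,j]$, identify the inner sum as $[f_{ih}(C)]_{hj}=\langle f_{ih},p_{hj}\rangle(C)$, and invoke Remark~\ref{resfCijh} for the truncation to $C^{[h,j]}$, with the mirror argument for (L). The only cosmetic difference is that the paper rewrites $[C^k]_{hj}$ as $p_{hj}^{(k)}(C)$ before changing the summation order rather than after.
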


\begin{proof}
Let $d=\deg f$.
\[\left[f(C)\right]_{i\,j} =
\sum_{k=0}^d \left[F_k C^k\right]_{i\,j} =
\sum_{k=0}^d \sum_{h\in [1,n]} f_{ih}^{(k)} \left[C^k\right]_{h\,j}=
\sum_{k=0}^d \sum_{h\in [1,n]} f_{ih}^{(k)} p_{hj}^{(k)}(C).\]
Changing summation, we get 
$\sum_{h\in [1,n]} \sum_{k=0}^d f_{ih}^{(k)} p_{hj}^{(k)}(C)$.

Considering that $f_{ih}^{(k)}\ne 0$ only if $i\le h$, and that
$p_{hj}^{(k)}\ne 0$ only if $h\le j$, we can restrict summation
to $h\in [i, j]$. Consequently, $\left[f(C)\right]_{i\,j}$ equals 

\[
\sum_{h\in [i, j]} \sum_{k=0}^d f_{ih}^{(k)} p_{hj}^{(k)}(C) =
\sum_{h\in [i, j]} \left\langle f_{ih}, p_{hj}\right\rangle(C) =
\sum_{h\in [i, j]} \left[f_{ih}(C)\right]_{h\,j}.
\]

By Remark~\ref{resfCijh},
we can replace $C$ by $C^{[h,j]}$, the matrix obtained from $C$
by replacing all entries with row or column index outside the 
interval $[h,j]$ by zeros. Therefore, 

\[
\left[f(C)\right]_{i\,j} =
\sum_{h\in [i, j]} \left\langle f_{ih}, p_{hj}\right\rangle(C^{[h,j]}) =
\sum_{h\in [i, j]} \left[f_{ih}(C^{[h,j]})\right]_{h\,j}.
\]
This shows the formulas for right substitution.

Now, if we substitute $C$ for the variable of $f$ to the left 
of the coefficients,
\[
\left[\fCl\right]_{i\,j} =
\sum_{k=0}^d \left[C^k F_k\right]_{i\,j} =
\sum_{k=0}^d \sum_{h\in[1,n]} \left[C^k\right]_{i\,h} f_{hj}^{(k)}=
\sum_{k=0}^d \sum_{h\in[1,n]} p_{ih}^{(k)}(C) f_{hj}^{(k)}.
\]
Changing summation, we get
\[
\sum_{h\in [1,n]} \sum_{k=0}^d p_{ih}^{(k)}(C) f_{hj}^{(k)} =
\sum_{h\in [1,n]} \sum_{k=0}^d f_{hj}^{(k)} p_{ih}^{(k)}(C).
\]
Considering that $f_{hj}^{(k)}\ne 0$ only if $h\le j$, and that
$p_{ih}^{(k)}\ne 0$ only if $i\le h$, we can restrict summation
to $h\in [i, j]$. Consequently, $\left[\fCl\right]_{i\,j}$ equals
\[
\sum_{h\in [i, j]} \sum_{k=0}^d f_{hj}^{(k)} p_{ih}^{(k)}(C) =
\sum_{h\in [i, j]} \left\langle f_{hj}, p_{ih}\right\rangle(C) =
\sum_{h\in [i, j]} \left[f_{hj}(C)\right]_{i\,h}.
\]
By Remark~\ref{resfCijh},
we can replace $C$ by $C^{[i,h]}$, the matrix obtained from $C$
by replacing all entries in rows and columns with index outside
${[i,h]}$ by zeros.
Therefore,
\[
\left[\fCl\right]_{i\,j} =
\sum_{h\in [i, j]} \left\langle f_{hj}, p_{ih}\right\rangle(C^{[i,h]}) =
\sum_{h\in [i, j]} \left[f_{hj}(C^{[i,h]})\right]_{i\,h}.
\]
\end{proof}
\goodbreak

\begin{lem}\label{eachij}
Let $f\in (\TnR)[x]$ with notation as in
\ref{matrixcoeffnotation} and in Definitions \ref{pathpolydef},
\ref{matrixsubstitution}, and \ref{scalarproddef}. Let $1\le i\le j\le n$.

\begin{description}{\itemsep=10pt \itemindent=0em \leftmargin=0em}
\item[{[right:]}]
The following are equivalent
\begin{enumerate}
\item\label{matrixcoeffiv}
$\left[f(C)\right]_{i\,j}\in I$ for all $C\in\TnS$ 
\item\label{scalarcoeffiv}
$\left[f_{ih}(C)\right]_{h\,j}\in I$ for all $C\in\TnS$, for all $h\in [i,j]$.
\item\label{pathpolyiv}
$\left\langle f_{ih}, p_{hj}\right\rangle\in\IntR(S^{\ast},I)$
for all $h\in [i,j]$.
\end{enumerate}

\item[{[left:]}]
The following are equivalent
\begin{enumerate}
\item\label{l-matrixcoeffiv}
$\left[\fCl\right]_{i\,j}\in I$ for all $C\in\TnS$ 
\item\label{l-scalarcoeffiv}
$\left[f_{hj}(C)\right]_{i\,h}\in I$ for all $C\in\TnS$, for all $h\in [i,j]$.
\item\label{l-pathpolyiv}
$\left\langle f_{hj}, p_{ih}\right\rangle\in\IntR(S^{\ast},I)$
for all $h\in [i,j]$.
\end{enumerate}
\end{description}
\goodbreak

\begin{proof}
For \textbf{right} substitution:
($\ref{scalarcoeffiv}\Rightarrow \ref{matrixcoeffiv}$) follows
directly from 
\[
\left[f(C)\right]_{i\,j} = 
\sum_{h\in [i, j]} \left[f_{ih}(C)\right]_{h\,j},
\]
which is Lemma~\ref{summationchange}.

($\ref{matrixcoeffiv}\Rightarrow \ref{scalarcoeffiv}$)
Induction on $h$ from $j$ down to $i$. Given 
$m\in [i,j]$, we show (\ref{scalarcoeffiv}) for $h=m$, assuming that 
the statement holds for all values $h\in [i,j]$ with $h>m$.
In the above formula from Lemma~\ref{summationchange}, we let
$C$ vary through $\TnSres{m}{j}$ (as in \ref{TnSres}).
For such a $C$, the summands
$\left[f_{ih}(C)\right]_{h\,j}$ with $h<m$ are zero, by
Remark~\ref{pathpolyproperties} (\ref{pathpolyzero}). 
The summands with $h>m$ are in $I$, by induction hypothesis. 
Therefore 
$\left[f(C)\right]_{i\,j}\in I$ for all $C\in\TnS$ implies 
$\left[f_{im}(C)\right]_{m\,j}\in I$ for all $C\in\TnSres{m}{j}$.
Since $\left[f_{im}(C)\right]_{m\,j}=\left[f_{im}(C^{[m,j]})\right]_{m\,j}$,
by Remark~\ref{resfCijh}, the statement follows for all $C\in\TnS$.

Finally, ($\ref{pathpolyiv}\Leftrightarrow \ref{scalarcoeffiv}$) 
because, firstly, $\left[f_{ih}(C)\right]_{h\,j}=
\left\langle f_{ih}, p_{hj}\right\rangle(C)$, for all $C\in \TnR$, 
and secondly, as far as the image of $\left\langle f_{ih},
p_{hj}\right\rangle$ is concerned, all possible values under
substitution of elements from $S$ for the variables are obtained as
$C$ varies through $\TnS$, because no variables other that $x_{ij}$
with $1\le i\le j\le n$ occur in this polynomial.

For \textbf{left} substitution:
($\ref{l-scalarcoeffiv}\Rightarrow\ref{l-matrixcoeffiv}$)
follows directly from
\[
\left[\fCl\right]_{i\,j} = 
\sum_{h\in [i, j]}  \left[f_{hj}(C)\right]_{i\,h},
\]
which is Lemma~\ref{summationchange}.

($\ref{l-matrixcoeffiv}\Rightarrow \ref{l-scalarcoeffiv}$)
Induction on $h$, from $h=i$ to $h=j$. Given $m\in [i,j]$,
we show $(\ref{l-scalarcoeffiv})$ for $h=m$ under the hypothesis
that it holds for all $h\in [i,j]$ with $h<m$.
In the above formula from Lemma~\ref{summationchange}, the summands
corresponding to $h<m$ are in $I$ by induction hypothesis. If we
let $C$ vary through $\TnSres{i}{m}$ (as in \ref{TnSres}), then, 
for such $C$, the summands 
$\left[f_{hj}(C)\right]_{i\,h}=\langle f_{hj},p_{ih}\rangle(C)$
corresponding to $h>m$ are zero, 
by Remark~\ref{pathpolyproperties} (\ref{pathpolyzero}).
Therefore
$\left[f_{mj}(C^{[i,m]})\right]_{i\,m}\in I$ for all $C\in \TnS$.
But $\left[f_{mj}(C^{[i,m]})\right]_{i\,m}=
\left[f_{mj}(C)\right]_{i\,m}$ for all $C\in \TnS$,
by Remark~\ref{resfCijh}.
Thus, (\ref{l-scalarcoeffiv}) follows by induction.

Finally, ($\ref{l-pathpolyiv}\Leftrightarrow \ref{l-scalarcoeffiv}$)
because, firstly, $\left[f_{hj}(C)\right]_{i\,h}=
\left\langle f_{hj}, p_{ih}\right\rangle(C)$, for all $C\in \TnR$,
and secondly, as far as the image of 
$\left\langle f_{hj}, p_{ih}\right\rangle$ is concerned, all possible
values under substitution of elements of $S$ for the variables are
realized as $C$ varies through $\TnS$, since no variables other than
$x_{ij}$ with $1\le i\le j\le n$ occur in this polynomial.
\end{proof}
\end{lem}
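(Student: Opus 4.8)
The plan is to reduce everything to the two summation identities of Lemma~\ref{summationchange}, which rewrite the single entry $[f(C)]_{i\,j}$ (respectively $[\fCl]_{i\,j}$) as a sum over $h\in[i,j]$ of the scalar‑coefficient contributions $[f_{ih}(C)]_{h\,j}$ (respectively $[f_{hj}(C)]_{i\,h}$). Given these, the implications $(\ref{scalarcoeffiv})\Rightarrow(\ref{matrixcoeffiv})$ and $(\ref{l-scalarcoeffiv})\Rightarrow(\ref{l-matrixcoeffiv})$ are immediate: each summand lies in $I$ by hypothesis, and $I$, being an ideal, is closed under addition, so the sum lies in $I$. The equivalences $(\ref{scalarcoeffiv})\Leftrightarrow(\ref{pathpolyiv})$ and $(\ref{l-scalarcoeffiv})\Leftrightarrow(\ref{l-pathpolyiv})$ also cost nothing: I would use the dictionary $[f_{ih}(C)]_{h\,j}=\langle f_{ih},p_{hj}\rangle(C)$ from Remark~\ref{pathpolyproperties}(\ref{fCij}), and note that the only variables occurring in these path polynomials are the $x_{st}$ with $1\le s\le t\le n$, so that as $C$ ranges over $\TnS$ the function $\langle f_{ih},p_{hj}\rangle$ attains exactly its full set of values on $S^{\ast}$.

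The heart of the matter, and the step I expect to be the main obstacle, is the direction $(\ref{matrixcoeffiv})\Rightarrow(\ref{scalarcoeffiv})$: from the hypothesis that the \emph{entire} sum lies in $I$ for every $C\in\TnS$, I must deduce that \emph{each individual} summand does. The idea is to isolate one summand at a time by restricting the matrix argument. For the right case I would induct on $h$ downward from $j$ to $i$; at the step $h=m$ I let $C$ range over the restricted family $\TnSres{m}{j}$ of Notation~\ref{TnSres}. For such $C$ every row and column outside $[m,j]$ vanishes, so by Remark~\ref{pathpolyproperties}(\ref{pathpolyzero}) all summands with $h<m$ are zero, while the summands with $h>m$ already lie in $I$ by the inductive hypothesis; the assumption $[f(C)]_{i\,j}\in I$ then forces the lone remaining term $[f_{im}(C)]_{m\,j}$ into $I$ for every $C\in\TnSres{m}{j}$.

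It remains to promote this conclusion from the restricted matrices back to all of $\TnS$, and here I would invoke Remark~\ref{resfCijh}, which gives $[f_{im}(C)]_{m\,j}=[f_{im}(C^{[m,j]})]_{m\,j}$: the value on an arbitrary $C\in\TnS$ equals the value on its restriction $C^{[m,j]}\in\TnSres{m}{j}$, which the induction already controls. This closes the induction and yields $(\ref{scalarcoeffiv})$. The left case is entirely parallel: I would start from $[\fCl]_{i\,j}=\sum_{h\in[i,j]}[f_{hj}(C)]_{i\,h}$ and induct on $h$ \emph{upward} from $i$ to $j$, restricting instead to $\TnSres{i}{m}$ so that the terms with $h>m$ vanish while those with $h<m$ are absorbed by the hypothesis, transferring back to all of $\TnS$ again via Remark~\ref{resfCijh}. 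I anticipate no genuine difficulty beyond bookkeeping: the only real care needed is to pair the correct direction of the induction with the correct restricted‑matrix family (downward with $\TnSres{m}{j}$ on the right, upward with $\TnSres{i}{m}$ on the left).
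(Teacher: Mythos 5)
Your proposal is correct and follows essentially the same route as the paper's own proof: the easy implication via the summation formula of Lemma~\ref{summationchange}, the hard implication by induction (downward over $h$ with restriction to $\TnSres{m}{j}$ for right substitution, upward with $\TnSres{i}{m}$ for left), killing the unwanted summands via Remark~\ref{pathpolyproperties}(\ref{pathpolyzero}) and transferring back to all of $\TnS$ via Remark~\ref{resfCijh}. The path-polynomial equivalences are also handled exactly as in the paper.
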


\section{Results for polynomials with matrix coefficients}%
\label{matrixcoeffresultsect}

Considering how matrix multiplication works, it is no surprise that,
for a fixed row-index $i$ and $f\in (\TnR)[x]$, whether the entries of 
the $i$-th row of $f(C)$ are in $I$ for every $C\in\TnS$, depends
only on the $i$-th rows of the coefficients of $f$. Indeed, if
$f=\sum_k F_k x^k$, and $\left[B\right]_i$ denotes the $i$-th row of
a matrix $B$, then 
\[
\left[f(C)\right]_i = \sum_k [F_k]_i C^k.
\]

Likewise, for a fixed column index $j$, whether the entries of
the $j$-th column of $\fCl$ are in $I$ for every $C\in\TnS$
depends only on the $j$-th columns of the coefficients of 
$f=\sum_k F_k x^k$: if $\left[B\right]^j$ denotes the
$j$-th column of a matrix $B$, then
\[
\left[\fCl\right]^j = \sum_k C^k [F_k]^j.
\]

Now we can formulate the precise criterion which the $i$-th rows, or
$j$-th columns, of the coefficients of $f\in (\TnR)[x]$ have to satisfy 
to guarantee that the entries of the $i$-th row of $f(C)$, or the $j$-th 
column of $\fCl$, respectively, are in $I$ for every $C\in \TnS$.

\begin{lem}\label{each-row}
Let $f\in (\TnR)[x]$.
We use the notation of \ref{matrixcoeffnotation}
and Definitions \ref{pathpolydef}.
\ref{matrixsubstitution}, and \ref{scalarproddef}.
\begin{description}
\item[{[right:]}]
Let $1\le i\le n$. The following are equivalent
\begin{enumerate}
\item\label{row-i-inI}
For every $C\in \TnS$, all entries of the $i$-th row of $f(C)$ are in $I$.
\item\label{row-i-each-entry}
For all $C\in\TnS$, for all $h,j$ with $i\le h\le j\le n$, 
$\left[f_{ih}(C)\right]_{h\,j}\in I$.
\item\label{rowi-pathpoly}
For all $h,j$ with $i\le h\le j\le n$,
$\left\langle f_{ih}, p_{hj}\right\rangle\in\IntR(S^{\ast},I)$.
\item\label{fihcriterion}
$f_{ih}\in \TMint{n-h+1}$ for $h=i,\ldots, n$.
\end{enumerate}
\item[{[left:]}]
Let $1\le j\le n$. The following are equivalent
\begin{enumerate}
\item\label{col-j-inI}
For every $C\in \TnS$, all entries of the $j$-th column of $\fCl$ are in
$I$.
\item\label{col-j-each-entry}
For all $C\in \TnS$, for all $i,h$ with $1\le i\le h\le j$,
$\left[f_{hj}(C)\right]_{i\,h}\in I$.
\item\label{colj-pathpoly}
For all $i,h$ with $1\le i\le h\le j$,
$\left\langle f_{hj}, p_{ih}\right\rangle\in\IntR(S^{\ast},I)$.
\item\label{fhjcriterion}
$f_{hj}\in \TMint{h}$ for $h=1,\ldots, j$.
\end{enumerate}
\end{description}
\end{lem}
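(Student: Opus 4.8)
The plan is to prove the "right" statement by establishing the cycle of equivalences through a chain of applications of the preceding lemmata, and then observe that the "left" statement follows by a symmetric argument. The heart of the matter is that conditions (\ref{row-i-inI}) and (\ref{row-i-each-entry}) are really statements gathered over all column-indices $j$, so I would reduce each of them to the per-entry statements already handled in Lemma~\ref{eachij}.

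First I would prove the equivalence of (\ref{row-i-inI}), (\ref{row-i-each-entry}), and (\ref{rowi-pathpoly}). For a fixed row-index $i$, the assertion that all entries of the $i$-th row of $f(C)$ lie in $I$ for every $C\in\TnS$ is, by definition, the conjunction over $j$ with $i\le j\le n$ of the statements $\left[f(C)\right]_{i\,j}\in I$ for all $C\in\TnS$; since entries strictly below the diagonal vanish, only $j\ge i$ contribute. So (\ref{row-i-inI}) is exactly the conjunction, over all $j\in[i,n]$, of condition (\ref{matrixcoeffiv}) of Lemma~\ref{eachij} with this $i$ and varying $j$. Applying Lemma~\ref{eachij}, each such $j$-instance is equivalent to (\ref{scalarcoeffiv}), namely $\left[f_{ih}(C)\right]_{h\,j}\in I$ for all $C\in\TnS$ and all $h\in[i,j]$, and likewise to (\ref{pathpolyiv}). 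Taking the conjunction over all $j\in[i,n]$ turns (\ref{matrixcoeffiv}) into (\ref{row-i-inI}), (\ref{scalarcoeffiv}) into (\ref{row-i-each-entry}), and (\ref{pathpolyiv}) into (\ref{rowi-pathpoly}). The indexing matches: the pairs $(h,j)$ ranging over $i\le h\le j\le n$ are precisely the union, over $j\in[i,n]$, of the sets $\{(h,j)\mid h\in[i,j]\}$.

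Next I would prove the equivalence of (\ref{rowi-pathpoly}) and (\ref{fihcriterion}). Fix $h$ with $i\le h\le n$. Condition (\ref{rowi-pathpoly}), restricted to this $h$, asserts $\left\langle f_{ih}, p_{hj}\right\rangle\in\IntR(S^{\ast},I)$ for every $j$ with $h\le j\le n$. By Lemma~\ref{image-equiv}, the image of $\langle f_{ih}, p_{hj}\rangle$ under substitution of elements of $S$ depends only on $f_{ih}$ and on $j-h$, so as $j$ runs through $[h,n]$ the difference $j-h$ runs through $\{0,1,\ldots,n-h\}$. This is exactly the range of indices $k$ appearing in criterion (\ref{eqiv-pathpoly}) of Proposition~\ref{scalar-coeff-iv-criterion} for the matrix size $n-h+1$. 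Applying that proposition to the scalar polynomial $f_{ih}\in R[x]$, the condition $\forall j\in[h,n]\colon \langle f_{ih}, p_{hj}\rangle\in\IntR(S^{\ast},I)$ is equivalent to $f_{ih}\in\Int_R(T_{n-h+1}(S),T_{n-h+1}(I))=\TMint{n-h+1}$. Conjoining over $h=i,\ldots,n$ gives the equivalence of (\ref{rowi-pathpoly}) and (\ref{fihcriterion}).

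Finally, the left-substitution chain is entirely parallel: (\ref{col-j-inI}) is the conjunction over $i\in[1,j]$ of condition (\ref{l-matrixcoeffiv}) of Lemma~\ref{eachij}, so the same lemma yields the equivalence with (\ref{col-j-each-entry}) and (\ref{colj-pathpoly}); and to pass from (\ref{colj-pathpoly}) to (\ref{fhjcriterion}) I would fix $h$, note that the relevant differences $h-i$ run through $\{0,1,\ldots,h-1\}$ as $i$ runs through $[1,h]$, and invoke Proposition~\ref{scalar-coeff-iv-criterion} for matrix size $h$ to get $f_{hj}\in\TMint{h}$. The only point requiring care — and the step I would watch most closely — is the bookkeeping of the index ranges: one must check that taking the conjunction of the per-entry equivalences over the appropriate range of $j$ (respectively $i$) reproduces exactly the quantifier ranges written in the statement, and that the shift-invariance of Lemma~\ref{image-equiv} lines up the differences $j-h$ (respectively $h-i$) with the matrix-size parameter in Proposition~\ref{scalar-coeff-iv-criterion}. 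No genuinely new idea is needed beyond these two cited results; the work is organizational.
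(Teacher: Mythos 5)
Your proof is correct and takes essentially the same route as the paper's: the equivalence of the first three conditions is obtained by conjoining the entrywise equivalences of Lemma~\ref{eachij} over the column (resp.\ row) index, and the equivalence with the fourth condition comes from Proposition~\ref{scalar-coeff-iv-criterion} combined with the shift-invariance of Lemma~\ref{image-equiv} to match the differences $j-h$ (resp.\ $h-i$) with the matrix-size parameter. Your index bookkeeping is accurate, so there is nothing to add.
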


\begin{proof}
For \textbf{right} substitution: by Lemma~\ref{eachij}, 
(\ref{row-i-inI}-\ref{rowi-pathpoly}) are equivalent
conditions for the $(i,j)$-th entry of $f(C)$ to be in $I$
for all $j$ with $i\le j\le n$, for every $C\in \TnS$.

(\ref{rowi-pathpoly}$\Rightarrow$\ref{fihcriterion})
For each $h$ with $i\le h\le n$, letting $j$ vary from $h$ to $n$
shows that criterion (\ref{eqiv-pathpoly}) of
Proposition~\ref{scalar-coeff-iv-criterion} for
$f_{ih}\in \TMint{n-h+1}$ is satisfied.

(\ref{fihcriterion}$\Rightarrow$\ref{rowi-pathpoly})
For each fixed $h$, $f_{ih}$ satisfies, by criterion (\ref{each-entry}) 
of Proposition~\ref{scalar-coeff-iv-criterion}, in particular, 
$\langle f_{ih}, p_{1k}\rangle\in \Int(S^{\ast},I)$ 
for all $1\le k\le n-h+1$. By Lemma~\ref{image-equiv}, 
$\langle f_{ih}, p_{1k}\rangle(S^{\ast})$ equals
$\langle f_{ih}, p_{h\, h+k-1}\rangle(S^{\ast})$. 
Therefore $\langle f_{ih}, p_{h j}\rangle\in \Int(S^{\ast},I)$
for all $j$ with $h\le j\le n$.

For \textbf{left} substitution: by Lemma~\ref{eachij}, 
(\ref{col-j-inI}-\ref{colj-pathpoly}) are equivalent conditions
for the $(i,j)$-th entry of $\fCl$ to be in $I$ for
all $i$ with $1\le i\le j$, for every $C\in \TnS$.

(\ref{rowi-pathpoly}$\Rightarrow$\ref{fihcriterion})
For each $h$ with $1\le h\le j$, letting $i$ range from $1$ to $h$
shows that criterion (\ref{eqiv-pathpoly}) of
Proposition~\ref{scalar-coeff-iv-criterion} for
$f_{hj}\in \TMint{h}$ is satisfied.

(\ref{fihcriterion}$\Rightarrow$\ref{rowi-pathpoly})
For each $h$ with $1\le h\le j$, 
applying criterion (\ref{each-entry}) of 
Proposition~\ref{scalar-coeff-iv-criterion} to $f_{hj}$
shows, in particular, $\langle f_{hj}, p_{ih}\rangle\in \Int(S^{\ast},I)$
for all $1\le i\le h$.
\end{proof}

We are now ready to prove the promised characterization of
polynomials with matrix coefficients in terms of polynomials
with scalar coefficients:

\begin{thm}\label{matrix-scalar-connection}
Let $f\in (T_n(R))[x]$. Let $F_k=(f_{ij}^{(k)})_{1\le i,j\le n}$ denote 
the coefficient of $x^k$ in $f$ and set 
\[f_{ij}=\sum_k f_{ij}^{(k)} x^k.\]
Then
\[f\in\TMMintn\quad\Leftrightarrow\quad
\forall i,j\;\;  f_{ij}\in \TMint{n-j+1}\]
and
\[f\in\TMMintnl\quad\Leftrightarrow\quad
\forall i,j\;\; f_{ij}\in \TMint{i}.\]
\end{thm}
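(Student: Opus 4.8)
The plan is to reduce the global statement about $f \in \TMMintn$ to the row-by-row (and for the left version, column-by-column) criteria already established in Lemma~\ref{each-row}. The key observation is that membership $f(C) \in \TnI$ for all $C \in \TnS$ is equivalent to the conjunction, over all row indices $i$, of the condition that every entry of the $i$-th row of $f(C)$ lies in $I$. Since $\left[f(C)\right]_i = \sum_k [F_k]_i C^k$ depends only on the $i$-th rows of the coefficients, these row conditions are independent of one another and can be treated separately.

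First I would make the reduction explicit: $f \in \TMMintn$ holds if and only if, for every $i$ with $1 \le i \le n$, all entries of the $i$-th row of $f(C)$ are in $I$ for every $C \in \TnS$, which is precisely condition (\ref{row-i-inI}) of the \textbf{right} part of Lemma~\ref{each-row}. By the equivalence of (\ref{row-i-inI}) and (\ref{fihcriterion}) in that lemma, this is in turn equivalent to $f_{ih} \in \TMint{n-h+1}$ for all $h$ with $i \le h \le n$. Collecting these conditions over all $i$, and noting that $f_{ih} = 0$ automatically for $h < i$ (since the coefficients lie in $\TnR$, so $f_{ih}$ only need be considered for $i \le h$), the full system of conditions becomes exactly: $f_{ij} \in \TMint{n-j+1}$ for all $i \le j$. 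Writing $i,j$ as free indices and observing that for $i > j$ the entry $f_{ij}$ vanishes and lies trivially in any $\TMint{m}$, this is the asserted characterization $\forall i,j\; f_{ij} \in \TMint{n-j+1}$.

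For the left version I would argue symmetrically using the \textbf{left} part of Lemma~\ref{each-row}. Here $f \in \TMMintnl$ is equivalent to requiring, for every column index $j$, that all entries of the $j$-th column of $\fCl$ lie in $I$ for every $C \in \TnS$; this uses the identity $\left[\fCl\right]^j = \sum_k C^k [F_k]^j$, so that the column conditions decouple. By the equivalence of (\ref{col-j-inI}) and (\ref{fhjcriterion}) in the lemma, the $j$-th column condition is equivalent to $f_{hj} \in \TMint{h}$ for all $h$ with $1 \le h \le j$. Renaming the row index $h$ as $i$ and collecting over all columns $j$ gives $f_{ij} \in \TMint{i}$ for all $i \le j$, and again the entries with $i > j$ vanish, so the condition holds for all $i,j$ as stated.

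The substantive mathematical content has already been carried out in Lemma~\ref{each-row} and Proposition~\ref{scalar-coeff-iv-criterion}; consequently this theorem is essentially a bookkeeping assembly, and the only point requiring care is the correct tracking of indices. The main thing to verify cleanly is that the index shift $n-h+1$ (right) versus $h$ (left) is recorded correctly: on the right the relevant path polynomials run from $h$ to $j$ with $j$ up to $n$, giving differences up to $n-h$ and hence the block size $n-h+1$, whereas on the left they run from $i$ to $h$ with $i$ down to $1$, giving differences up to $h-1$ and block size $h$. Once this is matched against the statements of Lemma~\ref{each-row} parts (\ref{fihcriterion}) and (\ref{fhjcriterion}), no further obstacle remains.
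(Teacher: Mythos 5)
Your proof is correct and takes essentially the same route as the paper, which likewise disposes of the theorem by applying Lemma~\ref{each-row} to each row index $i$ for the right version and to each column index $j$ for the left version. Your additional bookkeeping (the decoupling of row conditions via $\left[f(C)\right]_i=\sum_k [F_k]_i C^k$, the vanishing of $f_{ij}$ for $i>j$, and the index check $n-h+1$ versus $h$) only makes explicit what the paper leaves implicit.
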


\begin{proof}
The criterion for $f\in\TMMintn$ is just Lemma~\ref{each-row}
applied to each row-index $i$ of the coefficients of $f\in (T_n(R))[x]$,
and the criterion for $f\in\TMMintnl$ is Lemma~\ref{each-row}
applied to each column index $j$ of the coefficients of $f$.
\end{proof}

With the above proof we have also shown Remark~\ref{uppertriangular} 
from the introduction, which is the representation in matrix form of 
Theorem~\ref{matrix-scalar-connection}.

\section{Applications to null-polynomials and integer-valued polynomials}%
\label{applsect}

\comment{%
\begin{thm} Let $R$ be a commutative ring. The set of
right null-polynomials in $(\TnR)[x]$, that is,
\[
N_{(\TnR)[x]}((\TnR)=\{f\in (\TnR)[x]\mid \forall C\in (\TnR)\>
f(C)=0\}
\]
and the set of left null-polynomials in $(\TnR)[x]$, that is,
\[
N^{\ell}_{(\TnR)[x]}((\TnR)=\{f\in (\TnR)[x]\mid \forall C\in (\TnR)\>
\fCl=0\},
\]
are ideals of $(\TnR)[x]$.
\end{thm}
}

Applying our findings to null-polynomials on upper triangular
matrices, we can describe polynomials with coefficients in $\TnR$ that 
induce the zero-function on $\TnR$ 
in terms of polynomials with coefficients in $R$ that induce the 
zero function on $\TnR$.
As before, we denote substitution for the variable in a polynomial
$f=\sum_k b_k x^k$ in $\TnR[x]$, to the
right or to the left of the coefficients, as
\[f(C)=\sum_k b_k C^k \qquad\textrm{and}\qquad \fCl=\sum_k C^k b_k \]
and define
\begin{align*}
N_{\TnR}(\TnR)&=\{f\in (\TnR)[x]\mid \forall C\in \TnR\;
f(C)=0\}\\
N^{\ell}_{\TnR}(\TnR)&=\{f\in \TnR[x]\mid \forall C\in \TnR\;
\fCl=0\}\\
\NR{k}&=\{f\in R[x]\mid \forall C\in \TnR\; f(C)=0\}
\end{align*}

Null-polynomials on matrix algebras
occur naturally in two circumstances: in connection with null-ideals
of matrices \cite{Fri04icm,Ri16nim}, and in connection with 
integer-valued polynomials.
For instance, if $D$ is a domain with quotient field $K$ and
$f\in(\TnK)[x]$, we may represent $f$ as $g/d$ with $d\in D$ and
$g\in(\TnD)[x]$. Then $f$ is (right) integer-valued on $\TnD$ if
and only if the residue class of $g$ in $T_n(D/dD)[x]$ is a 
(right) null-polynomial on $T_n(D/dD)$ \cite{Fri05pspa}.

From Theorem~\ref{matrix-scalar-connection}
we derive the following corollary (see also Remark~\ref{uppertriangular}):

\begin{cor}\label{nullpolymatrixrep}
If we identify polynomials in $(\TnR)[x]$ with their images in
$T_n(R[x])$ under the isomorphism of \ref{isomorphism}, then\\
$N_{\TnR}(\TnR)=$
\[
\begin{pmatrix}
\NR{n}&\NR{n-1}&\ldots&\NR{2}&\NR{1}\cr
0&\NR{n-1}&\ldots&\NR{2}&\NR{1}\cr
&&\ddots&&\cr
0&0&\ldots&\NR{2}&\NR{1}\cr
0&0&\ldots&0&\NR{1}\cr
\end{pmatrix}
\] 

and $N^{\ell}_{\TnR}(\TnR)=$ 
\[
\begin{pmatrix}
\NR{1}&\NR{1}&\ldots&\NR{1}&\NR{1}\cr
0&\NR{2}&\ldots&\NR{2}&\NR{2}\cr
&&\ddots&&\cr
0&0&\ldots&\NR{n-1}&\NR{n-1}\cr
0&0&\ldots&0&\NR{n}\cr
\end{pmatrix}
\]
\end{cor}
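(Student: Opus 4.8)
The plan is to obtain Corollary~\ref{nullpolymatrixrep} as the specialization of Theorem~\ref{matrix-scalar-connection} (equivalently, of its matrix form in Remark~\ref{uppertriangular}) to the null-polynomial setting singled out in Example~\ref{nullpoly}. First I would set $S=R$ and $I=(0)$ and unwind the resulting identifications. Under this choice $\TMMintn$ becomes $N_{\TnR}(\TnR)$ and $\TMMintnl$ becomes $N^{\ell}_{\TnR}(\TnR)$, while the scalar-coefficient object $\TMint{k}=\IntR(T_k(S),T_k(I))$ collapses to $\{g\in R[x]\mid \forall C\in T_k(R):\, g(C)\in T_k((0))=\{0\}\}=\NR{k}$. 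In particular, since $T_1(S)=S$ and $T_1(I)=I$, the corner object $\IntR(S,I)=\TMint{1}$ becomes $\NR{1}$, so every occurrence of $\IntR(S,I)$ in Remark~\ref{uppertriangular} is replaced by $\NR{1}=\NR{n-n+1}$, consistently with the pattern of the remaining columns.

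Second, I would invoke Theorem~\ref{matrix-scalar-connection} directly. After identifying $f\in(\TnR)[x]$ with $\varphi(f)=(f_{ij})\in T_n(R[x])$, the theorem states that $f\in\TMMintn$ is equivalent to $f_{ij}\in\TMint{n-j+1}$ for all $i,j$, and that $f\in\TMMintnl$ is equivalent to $f_{ij}\in\TMint{i}$ for all $i,j$. Translating the right-hand conditions through the identifications of the previous step turns the first criterion into ``$(f_{ij})\in T_n(R[x])$ with $f_{ij}\in\NR{n-j+1}$,'' which is exactly the first displayed matrix (column $j$ carrying $\NR{n-j+1}$, so that column $1$ is $\NR{n}$ and column $n$ is $\NR{1}$), and the second into ``$f_{ij}\in\NR{i}$,'' which is the second displayed matrix (row $i$ carrying $\NR{i}$).

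Finally, I would record the bookkeeping that makes the strictly-lower-triangular zeros appear automatically: since the coefficients $F_k$ of $f$ are upper triangular, $f_{ij}=0$ for $i>j$, and $0$ lies in every $\NR{m}$, so the conditions below the diagonal are vacuous and the displayed arrays are genuinely upper triangular. I do not expect a real obstacle here; the only points needing care are the correct reading of the column shift $n-j+1$ for right substitution versus the row-indexed $\NR{i}$ for left substitution, and the verification that both $\IntR(S,I)$ and $\TMint{1}$ specialize to $\NR{1}$. Both are immediate from the definitions, so the corollary is a transcription of Theorem~\ref{matrix-scalar-connection} under $S=R$, $I=(0)$, requiring no argument beyond this dictionary.
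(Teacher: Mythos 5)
Your proposal is correct and is essentially the paper's own argument: the paper derives Corollary~\ref{nullpolymatrixrep} directly from Theorem~\ref{matrix-scalar-connection} (and its matrix form in Remark~\ref{uppertriangular}) by the same specialization $S=R$, $I=(0)$, under which $\TMint{k}$ becomes $\NR{k}$ and $\IntR(S,I)$ becomes $\NR{1}$. Your write-up simply makes the dictionary explicit, which the paper leaves to the reader.
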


This allows us to conclude:

\begin{thm}\label{ideal}
Let $R$ be a commutative ring. 
The set $N_{\TnR}(\TnR)$ of right null-polynomials on $\TnR$ with
coefficients in $(\TnR)[x]$,
and the set $N^{\ell}_{\TnR}(\TnR)$ of left null-polynomials on
$\TnR$ with coefficients in $\TnR$,
are ideals of $(\TnR)[x]$.
\end{thm}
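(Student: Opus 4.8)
The plan is to transport everything through the ring isomorphism $\varphi\colon (\TnR)[x]\to T_n(R[x])$ of Remark~\ref{isomorphism} and to argue entirely inside the matrix ring $T_n(R[x])$. By Corollary~\ref{nullpolymatrixrep}, $\varphi$ carries $N_{\TnR}(\TnR)$ onto the set $\mathcal{N}$ of upper triangular matrices $M=(m_{ij})$ over $R[x]$ with $m_{ij}\in\NR{n-j+1}$ for $i\le j$, and carries $N^{\ell}_{\TnR}(\TnR)$ onto the set $\mathcal{N}^{\ell}$ of those $M$ with $m_{ij}\in\NR{i}$ for $i\le j$. Since $\varphi$ is a ring isomorphism, it suffices to prove that $\mathcal{N}$ and $\mathcal{N}^{\ell}$ are two-sided ideals of $T_n(R[x])$.

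Two facts about the scalar null-polynomial sets drive the argument, and I would establish them first. First, each $\NR{k}$ is a two-sided ideal of $R[x]$: for fixed $C\in T_k(R)$ the scalar-substitution map $R[x]\to M_k(R)$, $f\mapsto f(C)$, is a ring homomorphism (scalar coefficients commute with $C$ and its powers), so its kernel is a two-sided ideal, and $\NR{k}$ is the intersection of these kernels as $C$ ranges over $T_k(R)$. Second, the $\NR{k}$ form a descending chain $\NR{1}\supseteq\NR{2}\supseteq\cdots\supseteq\NR{n}$: for $m'\le m$ one embeds $T_{m'}(R)$ into $T_m(R)$ by sending $C$ to the matrix $C'$ with $C$ in the top-left block and zeros elsewhere; the top-left block of $f(C')$ is $f(C)$, so $f\in\NR{m}$ forces $f\in\NR{m'}$, i.e.\ $\NR{m}\subseteq\NR{m'}$.

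With these in hand the verification is index bookkeeping. Additive closure of $\mathcal{N}$ and $\mathcal{N}^{\ell}$ is immediate. For $\mathcal{N}$, $A=(a_{ij})\in T_n(R[x])$ and $M\in\mathcal{N}$, the product entries are
\[(AM)_{ij}=\sum_{i\le h\le j} a_{ih}m_{hj},\qquad (MA)_{ij}=\sum_{i\le h\le j} m_{ih}a_{hj}.\]
In $(AM)_{ij}$ every $m_{hj}$ already lies in $\NR{n-j+1}$, which is an ideal, so each term and hence the sum stays in $\NR{n-j+1}$. In $(MA)_{ij}$ we have $m_{ih}\in\NR{n-h+1}$; since $h\le j$ gives $n-h+1\ge n-j+1$, the chain yields $\NR{n-h+1}\subseteq\NR{n-j+1}$, so each term $m_{ih}a_{hj}$ lies in the ideal $\NR{n-j+1}$. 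Thus $\mathcal{N}$ is a two-sided ideal. For $\mathcal{N}^{\ell}$ the roles of the two factors are interchanged: right multiplication is handled by the ideal property of $\NR{i}$ alone, while left multiplication uses $m_{hj}\in\NR{h}\subseteq\NR{i}$ (valid because $h\ge i$) from the chain.

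The one genuinely structural input — and the step I expect to carry the weight — is the monotonicity $\NR{m}\subseteq\NR{m'}$ for $m\ge m'$: it is precisely what reconciles the staircase pattern of Corollary~\ref{nullpolymatrixrep} with closure under multiplication on the nontrivial side (right multiplication for $\mathcal{N}$, left multiplication for $\mathcal{N}^{\ell}$). The complementary side needs only that each $\NR{k}$ is an ideal, which is automatic from the substitution homomorphism. Everything else is formal, so no treatment of two-sidedness beyond these two symmetric cases is required.
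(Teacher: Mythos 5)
Your proof is correct and follows essentially the same route as the paper: both reduce via Corollary~\ref{nullpolymatrixrep} to the matrix ring $T_n(R[x])$ and then combine the two facts that each $\NR{k}$ is an ideal of $R[x]$ (substitution homomorphism) and that $\NR{m}\subseteq\NR{k}$ for $m\ge k$, together with the formula for matrix multiplication. You merely spell out the index bookkeeping and the block-embedding argument for the inclusion chain, which the paper states without proof.
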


\begin{proof}
Note that $\NR{m}\subseteq \NR{k}$ for $m\ge k$. Also,
$\NR{m} R[x]\subseteq \NR{m}$ and $R[x]\NR{m} \subseteq \NR{m}$ 
by substitution homomorphism for polynomials with coefficients 
in the commutative ring $R$.
This observation together with matrix multiplication
shows that the image of $N_{\TnR}((\TnR)$ in $T_n(R[x])$
under the ring isomorphism from $(\TnR)[x]$ to $T_n(R[x])$ is 
an ideal of $T_n(R[x])$, and likewise the image of 
$N^{\ell}_{\TnR}(\TnR)$ in $T_n(R[x])$. 
\end{proof}

Now let $D$ be a domain with quotient field $K$.
Applying Theorem~\ref{matrix-scalar-connection}
to integer-valued polynomials (see also Remark~\ref{uppertriangular})
\begin{align*}
\TMMintnD&=\{f\in (\TnK)[x]\mid \forall C\in\TnD\; f(C)\in\TnD\}\\
\TMMintnDl&=\{f\in (\TnK)[x]\mid \forall C\in\TnD\; \fCl\in\TnD\}\\
\TMintD{n}&=\{f\in K[x]\mid \forall C\in\TnD\; f(C)\in\TnD\}
\end{align*}
yields the following corollary: 

\begin{cor}
If we identify polynomials in $(\TnK)[x]$ with their images in
$T_n(K[x])$ under the isomorphism of \ref{isomorphism}, then\\

$\TMMintnD=$\\
\[
\begin{pmatrix}
\TMintD{n}&\TMintD{n-1}&\ldots&\TMintD{2}&\TMintD{1}\cr
0&\TMintD{n-1}&\ldots&\TMintD{2}&\TMintD{1}\cr
&&\ddots&&\cr
0&0&\ldots&\TMintD{2}&\TMintD{1}\cr
0&0&\ldots&0&\TMintD{1}\cr
\end{pmatrix}
\]

and\ \ $\TMMintnDl=$\\
\[
\begin{pmatrix}
\TMintD{1}&\TMintD{1}&\ldots&\TMintD{1}&\TMintD{1}\cr
0&\TMintD{2}&\ldots&\TMintD{2}&\TMintD{2}\cr
&&\ddots&&\cr
0&0&\ldots&\TMintD{n-1}&\TMintD{n-1}\cr
0&0&\ldots&0&\TMintD{n}\cr
\end{pmatrix}
\]
\end{cor}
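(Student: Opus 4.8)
The plan is to obtain this corollary as a direct specialization of Theorem~\ref{matrix-scalar-connection}. Following Example~\ref{ivpoly}, I would set $R=K$ and $S=I=D$, so that $\TMint{m}$ becomes $\TMintD{m}$ and $\TMMintn$ becomes $\TMMintnD$ (and likewise $\TMMintnl$ becomes $\TMMintnDl$). With this dictionary in place, the two equivalences of Theorem~\ref{matrix-scalar-connection} read directly: $f\in\TMMintnD$ if and only if $f_{ij}\in\TMintD{n-j+1}$ for all $i,j$, and $f\in\TMMintnDl$ if and only if $f_{ij}\in\TMintD{i}$ for all $i,j$.

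The remaining task is purely a translation of these entrywise conditions into matrix form under the isomorphism $\varphi$ of Remark~\ref{isomorphism}. First I would note that, since $f\in(\TnK)[x]$, its image $\varphi(f)$ lies in $T_n(K[x])$, whence $f_{ij}=0$ whenever $i>j$; this accounts for all the zeros strictly below the diagonal in both displayed matrices. For the right-substitution case, the condition $f_{ij}\in\TMintD{n-j+1}$ depends only on the column index $j$, so in column $j$ every on-or-above-diagonal entry lies in $\TMintD{n-j+1}$; reading $j$ from $1$ to $n$ produces the entries $\TMintD{n},\TMintD{n-1},\ldots,\TMintD{1}$ from left to right, which is exactly the first array. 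For the left-substitution case, the condition $f_{ij}\in\TMintD{i}$ depends only on the row index $i$, yielding $\TMintD{1},\TMintD{2},\ldots,\TMintD{n}$ as $i$ runs from the top row down, which reproduces the second array.

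There is no genuine obstacle here: all the substantive work has already been carried out in establishing Theorem~\ref{matrix-scalar-connection}, and what remains is bookkeeping. The only point deserving a moment's care is confirming that the index shift $n-j+1$ in the right case and $i$ in the left case align correctly with the displayed matrices; one checks this most quickly by verifying the two extreme corners, namely that the top-left entry $\TMintD{n}$ corresponds to $j=1$ in the right case, and that the bottom-right entry $\TMintD{n}$ corresponds to $i=n$ in the left case.
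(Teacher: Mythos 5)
Your proposal is correct and matches the paper's approach exactly: the paper likewise obtains this corollary by specializing Theorem~\ref{matrix-scalar-connection} (equivalently Remark~\ref{uppertriangular}) to $R=K$, $S=I=D$, with the rest being the same bookkeeping translation into matrix form. The index checks you perform at the corners confirm the alignment correctly.
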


We can see the following:

\begin{thm}\label{ring}
Let $D$ be a domain with quotient field $K$. 
The set $\TMMintnD$ of right integer-valued polynomials with 
coefficients in $(\TnK)[x]$ and the set $\TMMintnDl$ of
left integer-valued polynomials with coefficients in $(\TnK)[x]$
are subrings of $(\TnK)[x]$.
\end{thm}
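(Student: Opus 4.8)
The plan is to use the matrix representation of $\TMMintnD$ and $\TMMintnDl$ supplied by the preceding corollary, so that closure under multiplication becomes a statement about multiplying upper triangular matrices whose entries are rings of integer-valued polynomials with scalar coefficients. Since $\TMMintnD$ is already known (by its definition as a set of integer-valued polynomials) to be closed under addition and to contain the constants from $\TnD$, and since it is an additive subgroup of $(\TnK)[x]$, the only thing left to verify is closure under multiplication; the same applies to the left version. So I would reduce the theorem to the single assertion that the displayed matrix form is closed under matrix multiplication in $T_n(K[x])$.

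First I would record the two facts that make the scalar rings behave well. The diagonal entries $\TMintD{m}$ are, by Example~\ref{ivpoly}, rings of integer-valued polynomials with scalar coefficients, hence closed under multiplication; and there is a containment $\TMintD{m}\subseteq\TMintD{k}$ whenever $m\ge k$, because a polynomial that is integer-valued on all of $\TnD[m]$ is in particular integer-valued on the smaller matrices $\TnD[k]$ (concretely, via the criterion of Proposition~\ref{scalar-coeff-iv-criterion}, condition (\ref{eqiv-pathpoly}) for the larger $n$ implies it for the smaller). Moreover each $\TMintD{m}$ is a $K[x]$-bimodule in the weak sense needed: products of an element of $\TMintD{m}$ with an arbitrary element of $K[x]$ need not stay inside, but products of two integer-valued polynomials do, and that is all the matrix multiplication will require, since every entry appearing in a product is a sum of products of entries of the two factors, each factor entry lying in some $\TMintD{\cdot}$.

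Next I would carry out the matrix-multiplication bookkeeping for the right case. In the displayed matrix for $\TMMintnD$ the $(i,j)$-entry lies in $\TMintD{n-j+1}$, i.e. the entry in column $j$ lies in the ring attached to the index $n-j+1$, independent of the row $i$ (subject to $i\le j$). If $M,N$ are two such matrices, the $(i,j)$-entry of $MN$ is $\sum_{i\le h\le j} M_{ih}N_{hj}$; here $M_{ih}\in\TMintD{n-h+1}$ and $N_{hj}\in\TMintD{n-j+1}$, and since $h\le j$ gives $n-h+1\ge n-j+1$, the containment $\TMintD{n-h+1}\subseteq\TMintD{n-j+1}$ puts both factors in the single ring $\TMintD{n-j+1}$, whose closure under multiplication (and addition) then places the whole sum in $\TMintD{n-j+1}$. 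Thus $MN$ again has the required column-indexed entries, so the matrix form is multiplicatively closed. The left case is symmetric: there the $(i,j)$-entry lies in $\TMintD{i}$, a row-indexed condition, and in the product sum $\sum_{i\le h\le j}M_{ih}N_{hj}$ one has $M_{ih}\in\TMintD{i}$ and $N_{hj}\in\TMintD{h}$ with $i\le h$ forcing $\TMintD{h}\subseteq\TMintD{i}$, so again both factors sit in the ring $\TMintD{i}$ and the product entry stays in $\TMintD{i}$.

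The main obstacle, such as it is, is precisely the monotonicity step: one must check that the index attached to the varying summation variable $h$ is \emph{weakly larger} than the index attached to the fixed output position, so that every summand lands in the output ring via the containment $\TMintD{m}\subseteq\TMintD{k}$ for $m\ge k$, rather than needing a product of polynomials from two \emph{different} rings (which would not be controlled). The asymmetry in how $n-j+1$ versus $i$ depends on the indices is what makes the right and left cases work for opposite reasons, and I would present the two computations in parallel to make the bookkeeping transparent. Once this is in place, closure under multiplication follows, and combined with the already-known additive-subgroup structure this shows both sets are subrings of $(\TnK)[x]$.
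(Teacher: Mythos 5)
Your proposal is correct and follows essentially the same route as the paper: the paper also reduces to the matrix representation, notes the containment $\TMintD{m}\subseteq\TMintD{k}$ for $m\ge k$, and combines it with substitution homomorphism into the single statement $\TMintD{i}\cdot\TMintD{j}\subseteq\TMintD{\min(i,j)}$, leaving the matrix-multiplication index bookkeeping (which you spell out explicitly, correctly in both the column-indexed right case and the row-indexed left case) implicit.
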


\begin{proof}
Note that $\TMintD{m}\subseteq \TMintD{k}$ for $m\ge k$. Together with
substitution homomorphism for polynomials with coefficients in $K$, 
this implies
\[
\TMintD{i}\cdot \TMintD{j}\subseteq \TMintD{\min(i,j)}.
\]
This observation together with matrix multiplication shows that the 
image of $\TMMintnD$ in $T_n(K[x])$ under the ring isomorphism of
$(\TnK)[x]$ and $T_n(K[x])$ is a subring of $T_n(K[x])$, and, likewise,
the image of $\TMMintnDl$ is a subring of $T_n(K[x])$.
\end{proof}

\bibliography{pfnc}
\bibliographystyle{siamese}

\end{document}